\numberwithin{equation}{section}
\newtheorem{theorem}{Theorem}[section]
\newtheorem{lemma}[theorem]{Lemma}
\newtheorem{claim}{Claim}[theorem]
\theoremstyle{definition}
\newcommand{\setm}{\setminus}
\newcommand{\subs}{\subset}
\def\<{\left\langle}
\def\>{\right\rangle}
\def\br#1;#2;{\bigl[ {#1} \bigr]^ {#2} }
\def\aint#1;#2;{A(#1,#2)}
\def\aintd#1;#2;{\dot A(#1,#2)}
\newcommand{\cf}{\operatorname{cf}}
 \author[I. Juh\'asz]{Istv\'an Juh\'asz}
 \address
       { Alfréd Rényi Institute of Mathematics, Hungarian Academy of Sciences
 }
 \email{juhasz@renyi.hu}
 \author[L. Soukup]{Lajos Soukup}
 \address
       { Alfréd Rényi Institute of Mathematics, Hungarian Academy of Sciences}
 \email{soukup@renyi.hu}
 \author[Z. Szentmikl\'ossy]{Zolt\'an Szentmikl\'ossy}
 \address{E\"otv\"os University of Budapest}
 \email{szentmiklossyz@gmail.com}
 \title[Resolvability of pseudocompact spaces]
    {Coloring Cantor sets and resolvability of pseudocompact spaces}
 \subjclass[2010]{54D30, 54A25, 54A35, 54E35}
 \keywords{pseudocompact, feebly compact, resolvable, Baire space, coloring, Cantor set}
 \thanks{The research on and preparation of this paper was
 supported by  NKFIH grant no. K113047.}
\begin{document}

\begin{abstract}
Let us denote by $\Phi(\lambda,\mu)$ the statement that $\mathbb{B}(\lambda) = D(\lambda)^\omega$,
i.e. the Baire space of weight $\lambda$, has a coloring with $\mu$ colors such
that every homeomorphic copy of the Cantor set $\mathbb{C}$ in $\mathbb{B}(\lambda)$ picks up all the $\mu$ colors.

We call a space $X\,$ {\em $\pi$-regular} if it is Hausdorff and for every non-empty open set $U$ in $X$ there is a
non-empty open set $V$ such that $\overline{V} \subset U$. We recall that a space $X$ is called {\em feebly compact}
if every locally finite collection of open sets in $X$ is finite. A Tychonov space is pseudocompact iff it is
feebly compact.

The main result of this paper is the following.

\medskip
\noindent {\bf Theorem.}
{\em Let $X$ be a crowded feebly compact $\pi$-regular space
and $\mu$ be a fixed (finite or infinite) cardinal.
If $\Phi(\lambda,\mu)$ holds for all $\lambda < \widehat{c}(X)$ then $X$ is $\mu$-resolvable,
i.e. contains $\mu$ pairwise disjoint dense subsets. (Here $\widehat{c}(X)$ is the smallest
cardinal $\kappa$ such that $X$ {\em does not contain} $\kappa$ many pairwise disjoint open sets.)}
\medskip

This significantly improves earlier results of van Mill \cite{vM}, resp. Ortiz-Castillo and Tomita \cite{OCT}.
\end{abstract}

\dedicatory{Dedicated to the memory of our old friend Bohus Balcar.}

\maketitle

\section{Introduction}

It is well-known that any compact Hausdorff space $X$ is maximally
resolvable, i.e $\Delta(X)$-resolvable, where $\Delta(X)$ is the smallest cardinality
of a non-empty open set in $X$. The question if this is also true for countably compact
regular spaces, however is completely open.

Pytkeev proved in \cite{Py} that crowded
countably compact regular spaces are $\omega_1$-resolvable and this result was extended
in \cite{JSSz} to crowded countably compact $\pi$-regular spaces.
$X\,$ is {\em $\pi$-regular} if it is Hausdorff and for every non-empty open set $U$ in $X$ there is a
non-empty open set $V$ such that $\overline{V} \subset U$.
Since any $\pi$-regular crowded countably compact space $X$
satisfies $\Delta(X) \ge \mathfrak{c}$, the natural question was raised there if $\omega_1$
can at least be replaced here by $\mathfrak{c}$, in ZFC.

If one weakens countably compact to pseudocompact, an even tougher problem seems to arise:
It is still unknown in ZFC whether all crowded pseudocompact spaces are at least
(2-)resolvable. We recall that Hewitt in \cite{H} defined pseudocompact spaces as those
{\em Tychonov} spaces on which every continuous real valued function is bounded.

Marde\v si\' c and Papi\' c defined in \cite{MaP} {\em feebly compact} spaces by having the
property that every locally finite collection of open sets in them is finite and showed
that a Tychonov space is pseudocompact iff it is feebly compact.
In his survey paper \cite{Pa} on problems on resolvability, in addition to the problem if
pseudocompact spaces are resolvable, Pavlov also asked if crowded regular feebly compact spaces
are resolvable.

The first significant advance on the problem if pseudocompact spaces are resolvable was done
by van Mill in \cite{vM} where it was shown that any crowded pseudocompact space $X$ is
$\mathfrak{c}$-resolvable, provided that $c(X) = \omega$, i.e. $X$ is CCC. Of course, this
also means that crowded CCC countably compact Tychonov spaces are $\mathfrak{c}$-resolvable.

At the 2016 TOPOSYM in Prague, Ortiz-Castillo and Tomita announced the following (partial)
improvement of van Mill's result: Every crowded pseudocompact space $X$ that satisfies
$c(X) \le \mathfrak{c}$ is resolvable, see \cite{OCT}. (It is not clear from \cite{OCT} if their proof
actually gives $\mathfrak{c}$-resolvability of $X$.)

\medskip

Our main result improves those of van Mill and Ortiz-Castillo-Tomita and, in addition, also gives
a partial affirmative answer to Pavlov's second question mentioned above. To help to formulate it,
we introduce some notation and terminology.

We denote by  $\mathbb{B}(\lambda)$ the $\omega$th power of the discrete space on $\lambda$,
i.e. the Baire space of weight $\lambda$. Moreover, given some cardinals $\lambda$ and $\mu$,
We denote by $\Phi(\lambda,\mu)$ the statement that $\mathbb{B}(\lambda)$
has a coloring with $\mu$ colors such
that every homeomorphic copy of the Cantor set $\mathbb{C}$ in $\mathbb{B}(\lambda)$ picks up
all the $\mu$ colors. Using the arrow notation of partition calculus, $\Phi(\lambda,\mu)$ can be written as
$\,\mathbb{B}(\lambda) \nrightarrow [\mathbb{C}]^1_\mu$.
Finally, we recall from \cite{J} that for any space $X$ we denote by $\widehat{c}(X)$ the smallest
cardinal $\kappa$ such that $X$ does not contain $\kappa$ many pairwise disjoint open sets.

\begin{theorem}\label{tm:main}
Let $X$ be a crowded feebly compact $\pi$-regular space and $\mu$ be a fixed cardinal.
If $\Phi(\lambda,\mu)$ holds for all $\lambda < \widehat{c}(X)$ then $X$ is $\mu$-resolvable.
\end{theorem}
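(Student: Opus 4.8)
The plan is to reduce $\mu$-resolvability to the construction of a single coloring, and then to import such a coloring from $\mathbb{B}(\lambda)$ along a tree of open sets in $X$. First I would recall that $X$ is $\mu$-resolvable if and only if there is a map $d\colon X\to\mu$ for which every nonempty open $U\subseteq X$ satisfies $d[U]=\mu$: the classes $d^{-1}(\alpha)$ are then $\mu$ pairwise disjoint dense sets. So it suffices to produce such a $d$. The only consequence of feeble compactness I would use is the standard one that a decreasing $\omega$-sequence $\langle V_n\rangle$ of nonempty open sets has $\bigcap_n\overline{V_n}\neq\emptyset$; this is exactly what lets the branches of a Cantor scheme be realized by genuine points of $X$.

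\emph{Core construction.} Fix a cardinal $\lambda<\widehat{c}(X)$ (the choice is discussed below) and, using $\pi$-regularity together with the fact that the ambient open sets contain $\lambda$ pairwise disjoint open subsets, build a $\lambda$-branching \emph{Cantor scheme}: a family $\{U_s:s\in\lambda^{<\omega}\}$ of nonempty open sets with $\overline{U_{s^\frown\alpha}}\subseteq U_s$ and $\overline{U_{s^\frown\alpha}}\cap\overline{U_{s^\frown\beta}}=\emptyset$ for $\alpha\neq\beta$. For $\sigma\in\lambda^\omega=\mathbb{B}(\lambda)$ put $F_\sigma=\bigcap_n\overline{U_{\sigma|n}}$; by the feeble-compactness tool each $F_\sigma\neq\emptyset$, and the level-wise disjointness of closures makes the $F_\sigma$ pairwise disjoint. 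Setting $Y=\bigcup_{\sigma}F_\sigma$ and defining $f\colon Y\to\mathbb{B}(\lambda)$ by $f(x)=\sigma\iff x\in F_\sigma$, one checks $f^{-1}([s])=Y\cap\overline{U_s}$, so the fibers match up coherently with the basic clopen sets of $\mathbb{B}(\lambda)$.

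\emph{Importing the colors.} Let $c\colon\mathbb{B}(\lambda)\to\mu$ witness $\Phi(\lambda,\mu)$, set $d=c\circ f$ on $Y$, and extend $d$ arbitrarily to all of $X$. Given a nonempty open $U$, the point is to exhibit a homeomorphic copy of the Cantor set $\mathbb{C}$ among the $\sigma$ whose fiber $F_\sigma$ meets $U$: once $\{\sigma:F_\sigma\cap U\neq\emptyset\}\supseteq K$ for some $K\cong\mathbb{C}$, the hypothesis $\Phi(\lambda,\mu)$ gives $c[K]=\mu$, whence $d[U]\supseteq c[K]=\mu$, and $d$ is the desired coloring. When $U$ actually contains a node $U_s$ this is immediate, since then $F_\sigma\subseteq U_{\sigma|(n+1)}\subseteq U_s\subseteq U$ for every $\sigma\supseteq s$, and the basic clopen set $[s]\cong\mathbb{B}(\lambda)$ contains a copy of $\mathbb{C}$. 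Thus everything reduces to arranging that the scheme \emph{sees} every open set through a full Cantor set of branches.

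\emph{Main obstacle and globalization.} The crux is exactly this last density requirement, and here the interplay between the global invariant $\widehat{c}(X)$ and the local structure becomes delicate. A single $\omega$-deep $\lambda$-branching tree has only $\lambda$ nodes and need not be a $\pi$-base when the local $\pi$-weight exceeds $\lambda$ (for instance in $2^{\omega_1}$ with $\lambda=\omega$), so one cannot simply demand a node inside each $U$. I expect the hard part to be building the scheme so that for \emph{every} nonempty open $U$ a perfect set of branches has its fibers $F_\sigma$ landing inside the open set $U$ and not merely in $\overline{U}$; this is where feeble compactness must be exploited in a fusion/diagonal manner rather than fiber by fiber. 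To finish globally I would fix $\lambda<\widehat{c}(X)$ uniformly (taking $\lambda$ to be the relevant cellularity, so that the ambient open sets genuinely split $\lambda$ ways, and passing to a suitable piece via a maximal pairwise-disjoint family so that the branching number stays constant), and then patch the colorings obtained on the members of that dense disjoint family: density guarantees that each nonempty open $U$ of $X$ meets one member in a nonempty open set, which by construction already carries all $\mu$ colors.
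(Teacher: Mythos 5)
Your opening reduction (a coloring $d$ with $d[U]=\mu$ for all nonempty open $U$ yields $\mu$-resolvability) and the import mechanism (pull back a coloring witnessing $\Phi(\lambda,\mu)$ along a Cantor scheme whose branches are realized by feeble compactness) do match the paper's starting point. But the step you yourself flag as the ``main obstacle'' is the entire content of the theorem, and your globalization does not close it. A single $\omega$-deep $\lambda$-branching scheme has only $\lambda$ nodes, while nothing bounds the local $\pi$-weight of $X$ by $\lambda$; passing to a maximal pairwise disjoint family merely relocates the problem, since a nonempty open $U$ lying inside one member $P$ of that family must itself see a Cantor set of fibers, which is exactly the unproved density requirement one level down --- your closing phrase ``which by construction already carries all $\mu$ colors'' assumes precisely what was to be arranged. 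The proposed ``fusion/diagonal'' fix cannot be executed either: an arbitrary feebly compact $\pi$-regular space admits no countable (nor otherwise usable) enumeration of its open sets against which to fuse, so there is nothing for the recursion to diagonalize. A further, independent defect: the hypothesis supplies $\Phi(\lambda,\mu)$ for \emph{every} $\lambda<\widehat{c}(X)$, and $\widehat{c}(X)$ is a strict supremum that need not be attained, so there is in general no single ``relevant cellularity'' $\lambda$ for which open sets genuinely split $\lambda$ ways; any correct argument must use colorings of $\mathbb{B}(\lambda)$ for varying $\lambda$, not one fixed $\lambda$.

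What the paper does instead is make the scheme \emph{transfinite}: a tree $T$ of nonempty open sets of height at most $\kappa=\widehat{c}(X)$, whose successor levels come from maximal families of closure-separated open subsets and whose limit levels consist of the sets $Int(\cap B)$ for cofinal branches $B$ --- it is this re-seeding at limits that lets the tree penetrate open sets that a fixed $\omega$-deep scheme can never reach. Given $U$, one considers the ordinal $\gamma_U$ at which the tree dies out over $U$, shrinks to a ``good'' $W$ (all nonempty open subsets of $W$ have the same $\gamma$), and proves a splitting claim (Claim \ref{cl:b}): below $\gamma$, eventually at least two nodes per level meet $W$, since otherwise $W$ would sit inside a single cofinal branch, contradicting $\gamma_W=\gamma$. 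This produces an injection of $2^{<\omega}$ into levels $T_{\beta_n}$ cofinal in some $\alpha$ of countable cofinality, hence a homeomorphic copy of $\mathbb{C}$ inside $\mathbb{B}(\kappa_\alpha)$ with $\kappa_\alpha=\sup_n|T_{\alpha_n}|<\kappa$; this is where the regularity of $\kappa$, the $<\kappa$ bound on antichains, and the full range of hypotheses $\Phi(\lambda,\mu)$, $\lambda<\kappa$, all enter. Finally, even your fiber-picking step needs repair: feeble compactness only places a cluster point of a branch in $\overline{W}$, not in $W$, and the paper's Lemma \ref{lm:ne} puts it on the \emph{boundary} $\partial\overrightarrow{U}$ of the branch intersection --- which is also what makes the colorings attached to distinct branches and distinct levels $\alpha$ have pairwise disjoint domains --- while $\pi$-regularity (choosing $\overline{W}\subs U$) converts surjectivity of the coloring on $\overline{W}$ into the required density inside arbitrary $U$.
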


It follows from results proved in  \cite{HJS} that if $\varrho < \mathfrak{c}$ is a regular cardinal then
$\Phi(\lambda, \mathfrak{c})$ holds for all $\lambda < \mathfrak{c}^{+\varrho}$.
Consequently any crowded feebly compact $\pi$-regular space
is $\mathfrak{c}$-resolvable, provided that $c(X) <  \mathfrak{c}^{+\omega}$,
or even if $c(X) <  \mathfrak{c}^{+\omega_1}$ when CH fails.

Another consequence of those results, see also \cite{W},
is that $\Phi(\lambda, \mathfrak{c})$ holds for all cardinals $\lambda$ provided
that for every singular cardinal $\nu$ of countable cofinality both $\nu^\omega = \nu^+$
and $\Box_\nu$ are valid. This in turn implies the consistency of the $\mathfrak{c}$-resolvability
of all crowded feebly compact $\pi$-regular spaces, moreover that the consistency of the negation of this
requires large cardinals.

Finally we mention the following unpublished result of W. Weiss \cite{W1}: If $\omega_1$ Cohen reals are
added to any ground model $V$, then the generic extension $V^{Fn(\omega_1, 2)}$ satisfies
$\Phi(\lambda, \omega_1)$ for all cardinals $\lambda$.

\section{The proof of Theorem \ref{tm:main}}

Let us fix the crowded feebly compact $\pi$-regular space $X$ and the cardinal $\mu \le \mathfrak{c}$
such that $\Phi(\lambda,\mu)$ holds for all $\lambda < \kappa = \widehat{c}(X)$. Note that then $\kappa$
is an uncountable regular cardinal, see e.g. 4.1 of \cite{J}. The topology of $X$ is denoted by $\tau$
and we write $\tau^+ = \tau \setm \{\emptyset\}$.

We call a sequence $\overrightarrow{U} = \langle U_n : n < \omega \rangle$ of members of $\tau^+$
{\em strongly decreasing} (in short: SD) if $\overline{U_{n+1}} \subs U_n$ holds for every $n < \omega$.
We shall denote by $SD(X)$ the family of all strongly decreasing sequences.

For  $\overrightarrow{U} = \langle U_n : n < \omega \rangle \in SD(X)$ we put:
$\cap\overrightarrow{U} = \bigcap \{U_n : n < \omega \}$. Clearly, $\cap\overrightarrow{U}$
is always closed in $X$, moreover it is non-empty because $X$ is feebly compact,
see  Lemma \ref{lm:ne} below.
(It can be shown that the converse of this is also true if $X$ is $\pi$-regular.)
Finally, for any $\,\overrightarrow{U}\in SD(X)$ we shall denote by $\partial\overrightarrow{U}$
the {\em boundary} of $\cap\overrightarrow{U}$.

The following simple lemma will play a crucial role in our proof.

\begin{lemma}\label{lm:ne}
Assume that $\overrightarrow{U} = \langle U_n : n < \omega \rangle \in SD(X)$ and $V \in \tau$
are such that for every $n < \omega$ we have $$V \cap (U_n\,\setm\,\cap\overrightarrow{U}) \ne \emptyset.$$
Then $\overline{V} \cap \partial\overrightarrow{U} \ne \emptyset$ as well.
\end{lemma}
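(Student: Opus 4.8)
The plan is to produce, from the hypotheses, a single point $p$ that simultaneously lies in $\overline V$ and in $\partial\overrightarrow U$. Writing $K = \cap\overrightarrow U$ for brevity — a non-empty closed set, the intersection being non-empty by feeble compactness — I would obtain $p$ as a cluster point of a suitable decreasing sequence of non-empty open sets manufactured out of $V$ and the $U_n$.

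First I would set $W_n = V \cap (U_n \setm K)$. Since $K$ is closed, each $W_n$ is open; the hypothesis says precisely that each $W_n$ is non-empty; and because $\langle U_n\rangle$ is decreasing (as $\overline{U_{n+1}} \subseteq U_n$ forces $U_{n+1} \subseteq U_n$), the sequence $\langle W_n : n<\omega\rangle$ is decreasing. The engine of the proof is then feeble compactness in its standard guise: a decreasing sequence of non-empty open sets in a feebly compact space has $\bigcap_n \overline{W_n} \ne \emptyset$ (this is the same fact that guarantees $K \ne \emptyset$). I then fix any $p \in \bigcap_n \overline{W_n}$.

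Next I would verify the three required memberships of $p$. That $p \in \overline V$ is immediate from $W_0 \subseteq V$. That $p \in K$ is where the strongly decreasing hypothesis enters: from $W_{n+1} \subseteq U_{n+1}$ we get $\overline{W_{n+1}} \subseteq \overline{U_{n+1}} \subseteq U_n$, and since $p \in \overline{W_{n+1}}$ this yields $p \in U_n$ for every $n$, i.e. $p \in K$. Finally, to place $p$ on the boundary rather than in the interior of $K$, I argue by contradiction: if some open $O$ satisfied $p \in O \subseteq K$, then $O$ would be a neighborhood of $p$ disjoint from every $W_n$ (as $W_n \cap K = \emptyset$), contradicting $p \in \overline{W_0}$. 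Hence $p \notin \operatorname{int} K$, so $p \in K \setm \operatorname{int} K = \partial\overrightarrow U$, and $p$ witnesses the conclusion.

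The only genuinely delicate point is the choice of the sets $W_n$: one must subtract $K$ so that the cluster point is \emph{forced off} the interior of $K$, yet keep them inside the $U_n$ so that the strongly decreasing property can pull the same point back \emph{into} $K$; the conclusion falls out of the tension between these two features. Everything else is routine, and I note that the argument uses only feeble compactness, not $\pi$-regularity.
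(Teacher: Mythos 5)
Your proof is correct and takes essentially the same route as the paper: both set $W_n = V \cap (U_n \setminus \cap\overrightarrow{U})$, use feeble compactness to extract a common cluster point of this decreasing sequence of non-empty open sets, and then check that this point lies in $\overline{V}$ and on the boundary of $\cap\overrightarrow{U}$. Your write-up even spells out the one step the paper leaves implicit, namely that the cluster point belongs to $\cap\overrightarrow{U}$ via $\overline{W_{n+1}} \subseteq \overline{U_{n+1}} \subseteq U_n$, and your closing observation that only feeble compactness (not $\pi$-regularity) is used matches the paper.
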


\begin{proof}
Let us put $W_n = V \cap (U_n\,\setm\,\cap\overrightarrow{U})$. Then $\langle W_n : n < \omega \rangle$ is a
decreasing sequence of non-empty open sets in $X$, hence there is a point $x$ with $x \in \overline{W_n}$
for infinitely many, hence all, $n \in \omega$ because $X$ is feebly compact. But then we have $x \in \overline{W_n} \subs \overline{V}$
and also $$x \in \cap\overrightarrow{U} \setm Int(\cap\overrightarrow{U}) = \partial\overrightarrow{U}$$
because $W_n \cap \cap\overrightarrow{U} = \emptyset$ implies $\overline{W_n} \cap Int(\cap\overrightarrow{U}) = \emptyset$.
\end{proof}

Next, using that $X$ is both crowded and $\pi$-regular, we fix for every open $U \in \tau^+$ a {\em maximal} family $\mathcal{S}(U) \subs \tau^+$
with $|\mathcal{S}(U)| > 1$ such that
\begin{enumerate}[(i)]
\item $V \in \mathcal{S}(U)$ implies $\overline{V} \subs U$;

\smallskip

\item if $V,W \in \mathcal{S}(U)$ are distinct then $\overline{V} \cap \overline{W} = \emptyset$.
\end{enumerate}
Clearly then the union of $\mathcal{S}(U)$ is dense in $U$.

Using the operation $\mathcal{S}(U)$ we now define a tree $T$ whose nodes are members of $\tau^+$
and the tree ordering is reverse inclusion
in the following natural way. The levels $T_\alpha$ are defined by transfinite recursion,
starting with $T_0 = \{X\}$.

If $T_\alpha$ has been defined then we put $$T_{\alpha+1} = \bigcup\{\mathcal{S}(U) : U \in T_\alpha\}$$
where, of course, the immediate successors of any $U \in T_\alpha$ are the members of $\mathcal{S}(U)$.

Finally, if $\alpha$ is a limit ordinal and $T\upharpoonright\alpha = \bigcup_{\beta < \alpha} T_\beta$
has been defined then $T_\alpha$ consists of all sets of the form
$Int(\cap B)$, where $B$ is any cofinal branch of $T\upharpoonright\alpha$ such that
$Int(\cap B) \ne \emptyset$.

Let $\theta$ be the height of $T$, i.e. the smallest ordinal for which $T_\theta = \emptyset$.
Clearly, $\theta$ is a limit ordinal. Note that every branch $B = \{U_\alpha : \alpha < \gamma\}$ of $T$
(where, of course, $U_\alpha \in T_\alpha$) has cardinality $< \kappa$ because
$$\{U_\alpha \setm U_{\alpha+1} : \alpha < \gamma\} \subs \tau^+$$ is a pairwise disjoint
collection of open sets.
This implies that $\theta \le \kappa$.

It is also obvious that every antichain of $T$, in particular every level $T_\alpha$,
also has cardinality less than $\kappa$, hence if $\theta = \kappa$
then $T$ is a $\kappa$-Suslin tree, however we shall not need this fact.

Let us denote by $E$ the set of all ordinals $\alpha \le \theta$ with $\cf(\alpha) = \omega$.
Then for every $\alpha \in E$ we may fix a strictly increasing sequence
$\langle\alpha_n : n < \omega\rangle$  with $\sup\{\alpha_n : n < \omega\} = \alpha$.
We then have $$\kappa_\alpha = \sup\{|T_{\alpha_n}| : n< \omega\} < \kappa$$
because $\kappa$ is an uncountable regular cardinal and
every level of $T$ has size $ < \kappa$.

Consequently, for every $\alpha \in E$ there is by our assumptions a coloring
$b_\alpha : \mathbb{B}(\kappa_\alpha) \to \mu$ such that every homeomorphic copy of
the Cantor set in $\mathbb{B}(\kappa_\alpha)$ picks up all colors $\nu \in \mu$.

To simplify our notation, using that $|T_{\alpha_n}| \le \kappa_\alpha$, we consider
each  level $T_{\alpha_n}$ of $T$ embedded in $\kappa_\alpha$, hence their product
$\prod_{n < \omega}T_{\alpha_n}$ is embedded in ${\kappa_\alpha}^\omega = \mathbb{B}(\kappa_\alpha)$.
Then it makes sense to put $$\mathcal{Y}_\alpha = SD(X) \cap \prod_{n < \omega}T_{\alpha_n}.$$
In other words, $\mathcal{Y}_\alpha$ consists of all
strongly decreasing sequences $\overrightarrow{U} = \langle U_n : n < \omega \rangle \in SD(X)$
for which there is a cofinal branch $B$ of $T\upharpoonright\alpha$ such that for every $n < \omega$
we have $B \cap T_{\alpha_n} = \{U_n\}$. Since $\mathcal{Y}_\alpha \subs \mathbb{B}(\kappa_\alpha)$,
the coloring $b_\alpha$ is defined, in particular, on $\mathcal{Y}_\alpha$.

Next we move back to the space $X$ and define $$Y_\alpha = \bigcup \{\partial\overrightarrow{U} : \overrightarrow{U} \in \mathcal{Y}_\alpha\}.$$
It is clear from our construction of the tree $T$ that for distinct $\overrightarrow{U},\,\overrightarrow{V} \in \mathcal{Y}_\alpha$ we have
$\partial\overrightarrow{U} \cap \partial\overrightarrow{V} = \emptyset$. Moreover, if $\alpha$ and $\beta$ are distinct elements of $E$ then
$Y_\alpha \cap Y_\beta = \emptyset$ as well.

Thus we may define for each $\alpha \in E$ the coloring $h_\alpha : Y_\alpha \to \mu$ by putting
$h_\alpha(y) = b_\alpha(\overrightarrow{U})$ for $y \in Y_\alpha$, where $\overrightarrow{U}$ is the unique element of
$\mathcal{Y}_\alpha$ such that $y \in \partial\overrightarrow{U}$. Moreover, $h = \bigcup_{\alpha \in E} h_\alpha$
is a well-defined coloring of $Y = \bigcup_{\alpha \in E} Y_\alpha$ with $\mu$ colors.
Our aim now is to show that $h$ establishes the $\mu$-resolvability of $X$ because $h^{-1}\{\nu\}$ is
dense in $X$ for every $\nu < \mu$. Equivalently, this means that for every open set $U \in \tau^+$
we have $h[U] = \mu$.
Actually, we shall prove the following stronger statement.

\begin{lemma}\label{lm:res}
For every $\,U \in \tau^+$ there is an $\alpha \in E$ such that $$h_\alpha[U] = \mu.$$
\end{lemma}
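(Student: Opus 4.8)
The plan is to reduce the statement to a single geometric construction and then feed it into the coloring $b_\alpha$. Fix $U\in\tau^+$. It suffices to produce an $\alpha\in E$ together with a homeomorphic copy $C$ of the Cantor set $\mathbb{C}$ with $C\subseteq\mathcal{Y}_\alpha$ such that $\partial\overrightarrow{V}\cap U\ne\emptyset$ for every $\overrightarrow{V}\in C$. Indeed, since $C\subseteq\mathcal{Y}_\alpha\subseteq\mathbb{B}(\kappa_\alpha)$ is a copy of $\mathbb{C}$, the coloring $b_\alpha$ assumes all $\mu$ values on $C$; so for each $\nu<\mu$ I may pick $\overrightarrow{V}\in C$ with $b_\alpha(\overrightarrow{V})=\nu$ and then a point $y\in\partial\overrightarrow{V}\cap U$, for which $h_\alpha(y)=b_\alpha(\overrightarrow{V})=\nu$. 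Hence $h_\alpha[U]=\mu$.

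To build $C$ I would first fix, using $\pi$-regularity, a set $W\in\tau^+$ with $\overline{W}\subseteq U$, and then construct a Cantor scheme $\langle t_s:s\in 2^{<\omega}\rangle$ of nodes of $T$ together with an increasing sequence of levels $\langle\gamma_n:n<\omega\rangle$ so that (a) $t_s\in T_{\gamma_{|s|}}$ and $W\cap t_s\ne\emptyset$, and (b) $t_{s^\frown 0},t_{s^\frown 1}$ are descendants of $t_s$ with $\overline{t_{s^\frown 0}}\cap\overline{t_{s^\frown 1}}=\emptyset$. Put $\alpha=\sup_n\gamma_n$, so $\alpha\in E$. For $x\in 2^\omega$ the nodes $t_{x\restriction n}$ determine a unique cofinal branch of $T\restriction\alpha$ (fill in the unique chain between consecutive $t_{x\restriction n}$); reading this branch off at the prescribed levels $\alpha_k$ yields an element $\overrightarrow{V}_x\in\mathcal{Y}_\alpha$ with $\cap\overrightarrow{V}_x=\bigcap_n t_{x\restriction n}=:K_x$. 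The map $x\mapsto\overrightarrow{V}_x$ has each coordinate determined by finitely many bits of $x$, hence is continuous, and it is injective because distinct branches diverge into disjoint closures; being a continuous injection of the compact space $2^\omega$ into the metrizable space $\mathbb{B}(\kappa_\alpha)$, it is a homeomorphism onto a copy $C$ of $\mathbb{C}$ inside $\mathcal{Y}_\alpha$.

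For the boundary condition I would apply Lemma \ref{lm:ne} with the single open set $W$. Fix $x\in 2^\omega$ and for each $n$ let $t^*_n$ be the sibling of $t_{x\restriction(n+1)}$, i.e. the other child of $t_{x\restriction n}$. Since siblings have disjoint closures, $t^*_n\subseteq t_{x\restriction n}\setminus\overline{t_{x\restriction(n+1)}}\subseteq t_{x\restriction n}\setminus K_x$, while $W\cap t^*_n\ne\emptyset$ by (a). Thus $W\cap(t_{x\restriction n}\setminus K_x)\ne\emptyset$ for every $n$, so Lemma \ref{lm:ne} gives $\overline{W}\cap\partial\overrightarrow{V}_x\ne\emptyset$; as $\overline{W}\subseteq U$ this yields $U\cap\partial\overrightarrow{V}_x\ne\emptyset$, as required.

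The main obstacle is the recursion step behind (a)--(b): given a node $t$ with $W\cap t\ne\emptyset$, one must find two descendants of $t$ with disjoint closures that \emph{both} still meet $W$ (after which one descends both to a common level to obtain $t_{s^\frown 0},t_{s^\frown 1}$). The naive attempt can fail, because it may happen that $W\cap t'$ meets exactly one member of the maximal family $\mathcal{S}(t')$ at every descendant $t'$ of $t$, so that no gap opens and no splitting is locally visible. The point I expect to carry the proof is that this cannot persist: if $W$ were unsplittable below $t$ in this sense, then following the unique thread would produce a branch $B$ of $T$ along which (using that $\bigcup\mathcal{S}(t')$ is dense in $t'$) one has $W\cap t\subseteq\overline{V}$ for each $V\in B$, whence $W\cap t\subseteq V$ for every $V\in B$ since consecutive nodes satisfy $\overline{V_{\xi+1}}\subseteq V_\xi$; thus the nonempty open set $W\cap t$ would lie in $Int(\cap B)$, so this branch could never die. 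That contradicts $T$ having height $\theta\le\kappa$ with $T_\theta=\emptyset$, i.e. the fact that every branch reaches a limit level whose intersection has empty interior. Making this splitting lemma precise, and arranging the bookkeeping so that the divergence levels $\gamma_n$ are cofinal in the chosen $\alpha$ while the branches are always read off at the prescribed levels $\alpha_k$, is where I anticipate the real work.
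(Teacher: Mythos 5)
Your overall architecture coincides with the paper's (shrink $U$ to a $W$ with $\overline{W}\subseteq U$, build a Cantor scheme of nodes of $T$ meeting $W$, read the branches off at the levels $\alpha_k$ to get a copy of $\mathbb{C}$ inside $\mathcal{Y}_\alpha$, apply $b_\alpha$, and finish with Lemma \ref{lm:ne} via siblings), and your ``thread'' dichotomy is sound and is essentially the same branch-survival argument the paper uses. But there is a genuine gap at exactly the point you defer as ``bookkeeping'': your splitting lemma only produces a split \emph{somewhere} below $t$, at a level you cannot control, and neither of the two things your scheme actually needs follows from that. First, to place $t_{s^\frown 0},t_{s^\frown 1}$ (for all $s\in 2^n$) at a common prescribed level, you must descend a node $u$ meeting $W$ to a prescribed deeper level while keeping contact with $W$; this can fail at \emph{limit} levels of $T$. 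Between splits your thread carries a fixed nonempty open set $O$, and the invariant $O\subseteq V_\xi$ is precisely what lets it pass limit levels (it forces $O \subseteq Int(\cap B)\ne\emptyset$); but at every unwanted split the uniqueness needed for $O\subseteq\overline{V'}$ is lost and you must shrink $O$, and if shrinks occur cofinally below a limit level the branch may die there, or its limit node may miss $W$ --- so a descendant of $u$ meeting $W$ need not exist at the target level at all. Second, if you instead let the splits fall where they may, the suprema of the levels along different branches $x$ of your scheme can differ, and then there is no single $\alpha\in E$ with all $\overrightarrow{V}_x$ in one $\mathcal{Y}_\alpha$; but membership in a single fixed $\mathcal{Y}_\alpha$ (branches cofinal in $T\upharpoonright\alpha$, read off at the prescribed $\alpha_k$) is indispensable for invoking the one coloring $b_\alpha$ on a single copy of $\mathbb{C}$ in $\mathbb{B}(\kappa_\alpha)$.

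The missing idea is the paper's ordinal invariant and goodness reduction. Define $\gamma_V=\min\{\gamma\le\theta : (\cup T_\gamma)\cap V=\emptyset\}$; since $V\subseteq V'$ implies $\gamma_V\le\gamma_{V'}$, finitely many strict decreases lead to a \emph{good} $W$ (every nonempty open $V\subseteq W$ has $\gamma_V=\gamma_W=\gamma$), and by $\pi$-regularity one may also demand $\overline{W}\subseteq U$. Goodness guarantees that at \emph{every} level $\beta<\gamma$, including limit levels, some node of $T_\beta$ meets $V$ --- this is what your proposal cannot secure --- and then your thread dichotomy, run with this extra information, upgrades to the paper's Claim \ref{cl:b}: for all $\beta\in[\beta(V),\gamma)$ at least \emph{two} nodes of $T_\beta$ meet $V$ (a unique such node $U$ forces $V\subseteq\overline{U}$, and cofinally many unique nodes would yield a cofinal branch $B$ of $T\upharpoonright\gamma$ with $V\subseteq\cap B$, contradicting $\gamma_V=\gamma$). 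This \emph{uniform} splitting lets one choose the split levels $\beta_n$ freely below the fixed $\gamma$, placing all nodes $f(s)$, $s\in 2^n$, at the common level $\beta_n$; then every branch has the same supremum $\alpha=\sup_n\beta_n\in E$, the map $F$ is (uniformly) continuous, and each $\overrightarrow{U}_x$ lands in $\mathcal{Y}_\alpha$, after which your concluding steps (compactness giving a homeomorphism, the sibling argument feeding Lemma \ref{lm:ne}, and the evaluation $h_\alpha(y_x)=b_\alpha(F(x))$) agree with the paper and are correct.
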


\begin{proof}
Let us define for every open $U \in \tau^+$ the ordinal $\gamma_U$ as follows:
$$\gamma_U = \min \{\gamma \le \theta : (\cup T_\gamma) \cap U = \emptyset \}.$$
$\gamma_U$ exists because $\cup T_\theta = \emptyset$, moreover
it is clear from our construction of the tree $T$ that $\gamma_U$ is a limit ordinal.

Let us say that $W \in \tau^+$ is {\em good} if for every $V \in \tau^+$ with $V \subs W$
we have $\gamma_V = \gamma_W$. It is clear from our assumptions that for every $U \in \tau^+$
there is a good $W \in \tau^+$ such that $\overline{W} \subs U$. Consequently, it suffices to show that
for every good $W \in \tau^+$ there is an $\alpha \in E$ such that $h_\alpha \big[\,\overline{W}\,\big] = \mu$.

So, consider any good $W \in \tau^+$ and put $\gamma_W = \gamma$. We are going to define a strictly increasing sequence
of ordinals $\beta_n < \gamma$ and an injective map $f : 2^{<\omega} \to T\upharpoonright\gamma$ such that
for every $n < \omega$ and $s \in 2^n$ we have $f(s) \in T_{\beta_n}$ and $f(s) \cap W \ne \emptyset$.
The ordinals $\beta_n$ and the map $f$ will be defined by recursion on $n < \omega$, using the following claim.

\begin{claim}\label{cl:b}
For every $V \in \tau^+$ with $V \subs W$ there is an ordinal $\beta(V) < \gamma$ such that
for any $\beta \in [\beta(V), \gamma)$ we have $$|\{U \in T_\beta : V \cap U \ne \emptyset\}| > 1\,.$$
\end{claim}

Indeed, for every $\beta < \gamma$ there is some $U \in T_\beta$ such that $U \cap V \ne \emptyset$
because $\gamma = \gamma_V$. But if no other member of $T_\beta$ intersects $V$ then $V \subs \overline{U}$
since otherwise, as $W$ is good, $\gamma_{(V \setm \overline{U})} = \gamma$ would result in a contradiction.
Thus if we had cofinally many $\beta < \gamma$ with a unique $U_\beta \in T_\beta$ intersecting $V$,
then there would be a cofinal branch $B$ of $T\upharpoonright\gamma$ with $V \subs \cap B$, contradicting
the choice of $\gamma_V = \gamma$. Thus Claim \ref{cl:b} has been proven.

\medskip

Now we define the ordinals $\beta_n < \gamma$ and the injection $f : 2^{<\omega} \to T\upharpoonright\gamma$
by recursion as follows. First we choose $\beta_0 < \gamma$ arbitrarily and then put $f(\emptyset) = U_0$
where $U_0$ is any member of $T_{\beta_0}$ such that $W \cap U_0 \ne \emptyset$. This is possible because
$\beta_0 < \gamma = \gamma_W$.

If we have already defined $\beta_n$ and $f(s)$ for all $s \in 2^{n}$ appropriately, then we may apply Claim
\ref{cl:b} to each non-empty open subset $W \cap f(s)$ of $W$ with $s \in 2^{n}$ and choose
$\beta_{n+1} < \gamma$ above $\beta_n$ and the finitely many values $\beta(W \cap f(s))$ for $s \in 2^{n}$.
Clearly, this recursion goes through and results in the desired sequence of ordinals $\beta_n$ and injection $f$.

Now, we clearly have $\sup\{\beta_n : n < \omega\} = \alpha \in E$ and we claim that $h_\alpha \big[\,\overline{W}\,\big] = \mu$.
To see this, note first that for every $x \in \mathbb{C} = 2^\omega$ the sequence
$\<f(x\upharpoonright n) : n < \omega \>$ determines a cofinal branch $B_x$ of the cut off tree $T\upharpoonright\alpha$.
This branch $B_x$ then determines the sequence $\overrightarrow{U}_x = \langle U_{x,n} : n < \omega \rangle \in \mathcal{Y}_\alpha$,
where $U_{x,n}$ is the unique element of $B_x$ on the level $T_{\alpha_n}$. The injectivity of $f$ clearly implies that if $x \ne x'$
then $B_x \ne B_{x'}$.

In this way we also obtain a map $F : \mathbb{C} \to \mathcal{Y}_\alpha \subs \mathbb{B}(\kappa_\alpha)$ defined by
$F(x) = \overrightarrow{U}_x$. This map $F$ is clearly injective. Moreover,
$F$ considered as a map from $\mathbb{C}$ to $\mathbb{B}(\kappa_\alpha)$ is also uniformly continuous. Indeed,
for every $n < \omega$ there is $m < \omega$ such that $\alpha_n \le \beta_m$, and then for any $x,x' \in 2^\omega$
with $x\upharpoonright m = x'\upharpoonright m$ we clearly have
$$F(x)\upharpoonright n = \overrightarrow{U}_x \upharpoonright n = \overrightarrow{U}_{x'} \upharpoonright n = F(x')\upharpoonright n .$$
But this means that $F$ actually is a homeomorphism, hence we have
$b_\alpha\big[F[\mathbb{C}]\big] = \mu$.

For every $x \in \mathbb{C}$ we have $F(x) = \overrightarrow{U}_x \in \mathcal{Y}_\alpha \subs SD(X)$ and, by our construction,
for every $n < \omega$ there are $m, k < \omega$ with $\alpha_k > \beta_m > \alpha_n$, and then
$\emptyset \ne W \cap (U_{x,n} \setm U_{x,k}) \subs W \cap (U_{x,n} \setm \cap \overrightarrow{U}_x)$.
Consequently we may apply Lemma \ref{lm:ne} to conclude that $\overline{W} \cap \partial(\overrightarrow{U}_x) \ne \emptyset$.
So, let us pick for each $x \in \mathbb{C}$ a point $y_x \in \overline{W} \cap \partial(\overrightarrow{U}_x)$ and recall that we have
$h_\alpha(y_x) = b_\alpha(\overrightarrow{U}_x) = b_\alpha(F(x))$. Consequently, we indeed have
$h_\alpha \big[\,\overline{W}\,\big] = b_\alpha[F[\mathbb{C}]] = \mu$.
This completes the proof of Lemma \ref{lm:res} and with that the proof of Theorem \ref{tm:main}.
\end{proof}


\end{document}